\newtheorem{theorem}{Theorem}[section]           
\newtheorem{corollary}[theorem]{Corollary}
\newtheorem{proposition}[theorem]{Proposition}
\theoremstyle{definition}
\newtheorem{remark}[theorem]{Remark}
\numberwithin{equation}{section}       
\renewcommand{\gg}{>\kern-2pt>}
\renewcommand{\ll}{<\kern-2pt<}
\renewcommand{\gg}{>\kern-2pt>}
\renewcommand{\ll}{<\kern-2pt<}
\renewcommand{\le}{\leqslant}
\renewcommand{\ge}{\geqslant}
\newcommand{\Ga}{\Gamma}
\newcommand{\de}{\delta}
\newcommand{\LD}{\mathcal{L}\!\mathcal{D}}
\renewcommand{\LD}{\mathcal{L}{\kern -1.9pt}\mathcal{D}}
\renewcommand{\LD}{\mathcal{D}}
\renewcommand{\LD}{\mathcal{L}{\kern -4pt}\mathcal{C}}
\renewcommand{\LD}{\mathcal{R}{\kern -3pt}\mathcal{C}}
\newcommand{\widesim}[2][1.5]{
  \mathrel{\underset{#2}{\scalebox{#1}[1]{$\sim$}}}
}
\begin{document}

\title[Identities and inequalities for the cosine and sine functions]{Identities and inequalities for the cosine and sine functions}


\author{Iosif Pinelis}

\address{Iosif Pinelis, Department of Mathematical Sciences\\
Michigan Technological University\\
Hough\-ton, Michigan 49931, USA
}
\email{ipinelis@mtu.edu}



\date{07.10.2018
}                               

\keywords{Inequalities; trigonometric functions; cosine function; sine function; Bessel function; series expansions; approximations 
}

\subjclass{26D05, 26D15, 40A25, 41A58 
}



\begin{abstract}
        Identities and inequalities for the cosine and sine functions are obtained. 
\end{abstract}

\maketitle



\section{Statements and discussion}\label{results}

The basic result of this note is 

\begin{theorem}\label{th:} 
For any real $x$ 
\begin{equation}\label{eq:expan}
	\cos\pi x=\sum_{j=1}^\infty t_j \pi^{2j} (1/4-x^2)^j,
\end{equation}
where 
\begin{equation}\label{eq:t,a}
	t_j:=\sum_{k=0}^\infty a_{j,k},\quad 
	a_{j,k}:=\frac{(-\pi^2/4)^k}{(2j+2k)!}\,\binom{j+k}j. 
\end{equation}

Moreover, one has the recurrence 
\begin{gather}\label{eq:recur}
	t_j=\frac{2(2j-3)}{\pi^2 j}\,t_{j-1}-\frac1{\pi^2 j(j-1)}\,t_{j-2}\quad\text{for }j=2,3,\dots, 
\end{gather}
with $t_0=0$ and $t_1=1/\pi$. 

Furthermore, for all natural $j$  
\begin{equation}\label{eq:<t_j<}
	0<t_j<\frac1{(2j)!},  
\end{equation}
and 
\begin{equation*}
	t_j\sim\frac1{(2j)!}\quad\text{as }j\to\infty. 
\end{equation*}
\end{theorem}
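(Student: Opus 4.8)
The plan is to start from the series representation $t_j=\sum_{k=0}^\infty a_{j,k}$ recorded in \eqref{eq:t,a} and to isolate its leading term. Since $a_{j,0}=\binom{j}{j}/(2j)!=1/(2j)!$, the asserted asymptotic $t_j\sim 1/(2j)!$ is precisely the statement that $(2j)!\,t_j\to1$, i.e.\ that the normalized tail $(2j)!\sum_{k\geq1}a_{j,k}$ tends to $0$ as $j\to\infty$. Because $t_j=a_{j,0}+\sum_{k\geq1}a_{j,k}$, it will suffice to bound the absolute tail and show $(2j)!\sum_{k\geq1}|a_{j,k}|\to0$; the terms $a_{j,k}$ alternate in sign through the factor $(-\pi^2/4)^k$, so this is also consistent with the upper bound $t_j<1/(2j)!$ already established in \eqref{eq:<t_j<}.

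The key computation is to simplify the ratio $(2j)!\,|a_{j,k}|$ into a product of elementary factors. Writing $\binom{j+k}{j}=\frac1{k!}\prod_{i=1}^k(j+i)$ and $\frac{(2j)!}{(2j+2k)!}=\prod_{i=1}^k\frac1{(2j+2i-1)(2j+2i)}$, I would pair each numerator factor $j+i$ with the two consecutive denominator factors $(2j+2i-1)(2j+2i)$. The convenient exact identity here, using $2j+2i=2(j+i)$, is
\[
\frac{j+i}{(2j+2i-1)(2j+2i)}=\frac{1}{2\bigl(2(j+i)-1\bigr)}=\frac{1}{4(j+i)-2}.
\]
This gives the closed form $(2j)!\,|a_{j,k}|=\dfrac{(\pi^2/4)^k}{k!}\prod_{i=1}^k\dfrac{1}{4(j+i)-2}$.

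From there the estimate is immediate: for $i\geq1$ we have $4(j+i)-2>4j$, so the product is smaller than $(4j)^{-k}$ and hence $(2j)!\,|a_{j,k}|<\frac1{k!}\bigl(\pi^2/(16j)\bigr)^k$. Summing over $k\geq1$ yields
\[
(2j)!\sum_{k=1}^\infty|a_{j,k}|<\exp\Bigl(\frac{\pi^2}{16j}\Bigr)-1\xrightarrow[j\to\infty]{}0,
\]
which forces $(2j)!\,t_j=1+(2j)!\sum_{k\geq1}a_{j,k}\to1$, that is, $t_j\sim1/(2j)!$.

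I do not anticipate a serious obstacle: once the series for $t_j$ is in hand, the whole argument is algebraic simplification followed by comparison with a convergent exponential-type series. The only points needing a little care are the interchange of the absolute value with the summation and the termwise bounding, but these are harmless because the factorial $(2j+2k)!$ in the denominator of $a_{j,k}$ makes the inner series converge absolutely and dominates any polynomial factors. The exact pairing producing $1/(4(j+i)-2)$ is what makes the decay rate $\pi^2/(16j)$ transparent; any cruder bound of the form $\mathrm{const}/j$ per factor would serve equally well for the qualitative limit.
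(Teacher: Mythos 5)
Your argument for the asymptotic relation $t_j\sim 1/(2j)!$ is correct: the exact simplification $(2j)!\,|a_{j,k}|=\frac{(\pi^2/4)^k}{k!}\prod_{i=1}^k\frac{1}{4(j+i)-2}$ and the resulting bound $\bigl|(2j)!\,t_j-1\bigr|<e^{\pi^2/(16j)}-1$ are clean and even more quantitative than the paper's argument. But this settles only the last of the four assertions of the theorem, and that is the genuine gap: the expansion \eqref{eq:expan}, the recurrence \eqref{eq:recur}, and the two-sided bound \eqref{eq:<t_j<} are nowhere proved. You even cite \eqref{eq:<t_j<} as ``already established,'' yet it is part of the statement to be proved, and your own tail estimate cannot recover it: at $j=1$ the quantity $e^{\pi^2/16}-1\approx0.85$ only yields $0<t_j<1.86/(2j)!$, not the asserted $t_j<1/(2j)!$. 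The paper gets both inequalities in \eqref{eq:<t_j<} and the asymptotics in one stroke from the alternating-series (Leibniz) structure: the ratio $a_{j,k+1}/(-a_{j,k})=\pi^2/\bigl(8(k+1)(2j+2k+1)\bigr)$ lies in $(0,1)$, so $0<t_j<a_{j,0}=1/(2j)!$, and this ratio tends to $0$ uniformly in $k$ as $j\to\infty$, which gives $t_j\sim a_{j,0}$.

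To complete the proof you would still need the two structural ingredients the paper supplies. For \eqref{eq:expan}, set $y=1/4-x^2$ and write $\cos\pi x=\cos\bigl(\frac{\pi}{2}\sqrt{1-4y}\bigr)=\sum_{n\ge0}\frac{(-1)^n}{(2n)!}(\pi^2/4)^n(1-4y)^n$, expand $(1-4y)^n$ binomially, and interchange the order of summation (justified by Fubini/absolute convergence); collecting the coefficient of $y^j$ produces exactly $t_j\pi^{2j}$, with the $j=0$ term vanishing since $t_0=\cos(\pi/2)=0$. For \eqref{eq:recur}, the paper verifies the identity $T_j(z)=\frac{2j-3}{2jz}\,T_{j-1}(z)-\frac{1}{4j(j-1)z}\,T_{j-2}(z)$ for $T_j(z)=\sum_{k\ge0}\frac{(-z)^k}{(2j+2k)!}\binom{j+k}{j}$ by comparing coefficients of powers of $z$, then specializes to $z=\pi^2/4$. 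Neither step is difficult, but neither appears in your proposal, so as written it proves only a fraction of the theorem.
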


The necessary proofs will be given in Section~\ref{proofs}. 

Recurrence \eqref{eq:recur} allows one to compute the coefficients $t_j$ in \eqref{eq:expan} quickly and efficiently. In particular, 
we see that  
\begin{align*}
	(t_1,\dots,t_5)&=\left(\frac{1}{\pi },\frac{1}{\pi ^3},\frac{12-\pi ^2}{6 \pi ^5},\frac{10-\pi ^2}{2 \pi
   ^7},\frac{1680-180 \pi ^2+\pi ^4}{120 \pi ^9}\right) \\ 
   &\approx\big(0.318,0.0323,1.16\times10^{-3},2.16\times10^{-5},2.46\times10^{-7}\big). 
\end{align*}

On the other hand, inequalities \eqref{eq:<t_j<} together with identity \eqref{eq:expan} will serve as the source of other inequalities, which begin with the following:  

\begin{corollary}\label{cor:}
For each natural $m$, consider the polynomial 
\begin{equation}\label{eq:P_m}
P_m(x):=\sum_{j=1}^m t_j \pi^{2j} (1/4-x^2)^j,	
\end{equation}
which is the $m$th partial sum of the series in \eqref{eq:expan}. 
Then for all $x\in(-1/2,1/2)$ 
\begin{equation}\label{eq:mono}
	P_m(x)<P_{m+1}(x)<\cos\pi x
\end{equation}
and 
\begin{equation}\label{eq:bound}
\begin{aligned}
	0<\de_m(x):=\cos\pi x-P_m(x)
	&<\frac{\pi^{2m+2}(1/4-x^2)^{m+1}}{(2m+2)!}\,\frac1{1-q_m} \\ 
	&\widesim[2.5]{m\to\infty}\de_m^*(x):=\frac{\pi^{2m+2}(1/4-x^2)^{m+1}}{(2m+2)!},  
\end{aligned}    
\end{equation}
where 
\begin{equation*}
	q_m:=\frac{\pi^2/4}{(2m+4)(2m+3)}  
\end{equation*}
and the asymptotic relation holds uniformly in $x\in(-\frac12,\frac12)$. 
\end{corollary}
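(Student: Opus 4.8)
The plan is to derive everything directly from the two-sided bound $0<t_j<\frac1{(2j)!}$ in \eqref{eq:<t_j<} together with the expansion \eqref{eq:expan}. First observe that for $x\in(-\tfrac12,\tfrac12)$ one has $1/4-x^2>0$, so, since $t_j>0$, every summand $t_j\pi^{2j}(1/4-x^2)^j$ in \eqref{eq:expan} is strictly positive. The difference $P_{m+1}(x)-P_m(x)$ equals the single term $t_{m+1}\pi^{2m+2}(1/4-x^2)^{m+1}>0$, which proves the left inequality in \eqref{eq:mono}; and $\cos\pi x-P_{m+1}(x)=\sum_{j\ge m+2}t_j\pi^{2j}(1/4-x^2)^j>0$ is a sum of positive terms, giving the right inequality. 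The same positivity yields $\de_m(x)=\sum_{j\ge m+1}t_j\pi^{2j}(1/4-x^2)^j>0$, the lower bound in \eqref{eq:bound}.

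For the upper bound I would write $u:=\pi^2(1/4-x^2)$, so that $0<u<\pi^2/4$ on $(-\tfrac12,\tfrac12)$, and use $t_j<\frac1{(2j)!}$ to get $\de_m(x)<\sum_{j\ge m+1}\frac{u^j}{(2j)!}$. The successive terms of this tail satisfy $\frac{u^{j+1}/(2j+2)!}{u^j/(2j)!}=\frac{u}{(2j+2)(2j+1)}$, which for $j\ge m+1$ is largest at $j=m+1$ and, since $u<\pi^2/4$, is bounded above by $\frac{\pi^2/4}{(2m+4)(2m+3)}=q_m<1$. Hence the tail is dominated term-by-term by the geometric series with first term $\frac{u^{m+1}}{(2m+2)!}$ and ratio $q_m$, and summing gives $\de_m(x)<\frac{u^{m+1}}{(2m+2)!}\,\frac1{1-q_m}$, which is exactly the claimed bound once $u$ is written back as $\pi^2(1/4-x^2)$.

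Finally, the asymptotic relation is essentially free: the displayed upper bound equals $\de_m^*(x)\cdot\frac1{1-q_m}$, and since $q_m\to0$ as $m\to\infty$ with $q_m$ not depending on $x$, the factor $\frac1{1-q_m}\to1$ uniformly in $x\in(-\tfrac12,\tfrac12)$. This delivers the uniform asymptotic equivalence in \eqref{eq:bound}.

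I do not expect a genuine obstacle here, since the whole statement is a clean consequence of \eqref{eq:<t_j<}. The one point deserving care is the geometric-domination step, where I must check that the ratio $\frac{u}{(2j+2)(2j+1)}$ is bounded by $q_m$ simultaneously for \emph{all} $j\ge m+1$ and all $x\in(-\tfrac12,\tfrac12)$; this uses both the monotonicity of the denominator in $j$ and the $x$-free bound $u<\pi^2/4$, and it is precisely what makes the final estimate — and hence the equivalence — hold uniformly in $x$.
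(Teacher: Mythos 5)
Your proposal is correct and follows essentially the same route as the paper: positivity of the terms $t_j\pi^{2j}(1/4-x^2)^j$ gives \eqref{eq:mono} and the lower bound in \eqref{eq:bound}, and the upper bound comes from replacing $t_j$ by $1/(2j)!$ and dominating the tail by a geometric series with ratio $q_m$, with $q_m\to0$ yielding the uniform asymptotic equivalence. No gaps.
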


\begin{remark}\label{rem:interp}
Note that the function $\de_m$ is analytic. So, 
in view of \cite[Proposition~I]{karlin-karon} (proved e.g.\ in \cite[page~29]{davis}), it follows from \eqref{eq:bound} that, for each natural $m$, $P_m(x)$ is the Hermite interpolating polynomial (HIP) (of degree $2m$) determined by the $2m+2$ conditions $\de_m^{(j)}(\pm\frac12)=0$ for $j=0,\dots,m$; 
in fact, by P\'olya's theorem \cite[Theorem~I]{karlin-karon}, the polynomial $P_m(x)$ is already determined by any $2m+1$ of the just mentioned $2m+2$ conditions. 

Explicit expressions of the general HIP were presented, in particular, in \cite{greville,spitzbart}. It is unclear, though, how to use those results to show 
that the polynomial $P_m(x)$, as defined in \eqref{eq:P_m}, is the HIP; nor is it seen how to derive 
monotonicity property \eqref{eq:mono} or the bound in \eqref{eq:bound} from the mentioned expressions. 

Least-squares approximating polynomials of the form 
\begin{equation}\label{eq:milo}
\sum_{j=1}^m d_{m,j}(1-x^2)^j	
\end{equation}
for 
$\cos(\pi x/2)$ and $x\in[-1,1]$ were given in \cite[Example~4.1]{milo86}; see also \cite{milo87} and \cite[\S5.3.2]{mastro-milo}. In contrast with the coefficients $t_j \pi^{2j}$ 
in \eqref{eq:P_m}, the coefficients $d_{m,j}$ 
in \eqref{eq:milo} depend on $m$. 
%
%
\qed
\end{remark}

One also has 
\begin{proposition}\label{prop:}
For all natural $j$ 
\begin{equation}\label{eq:t=}
		t_j=\frac{\pi^{1-j}}{2j!}\, J_{j-1/2}(\pi/2),   
\end{equation}
where 
\begin{equation}\label{eq:J}
	J_\nu(z):=\sum_{k=0}^\infty \frac{(-1)^k (z/2)^{\nu+2k}}{k!\,\Ga(\nu+k+1)}
\end{equation}
is an expression defining the Bessel function (of the first kind) -- as e.g.\ is done in \cite[page~359]{whittaker-watson}. 
\end{proposition}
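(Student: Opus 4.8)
The plan is to obtain \eqref{eq:t=} by a direct manipulation of the defining series \eqref{eq:t,a} for $t_j$, recognizing it as a constant multiple of the Bessel series \eqref{eq:J} with $\nu=j-1/2$ and $z=\pi/2$. The only substantive ingredient is the Legendre duplication formula for the Gamma function; everything else is bookkeeping of powers of $\pi$ and $2$. Since the series \eqref{eq:J} is entire in $z$ and the series in \eqref{eq:t,a} converges absolutely (indeed \eqref{eq:<t_j<} already gives convergence), no delicate justification of term-by-term identification is needed, so I will not dwell on convergence.

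First I would expand the binomial coefficient in \eqref{eq:t,a} as $\binom{j+k}j=\frac{(j+k)!}{j!\,k!}$, giving
\begin{equation*}
	a_{j,k}=\frac{(-1)^k(\pi^2/4)^k\,(j+k)!}{(2j+2k)!\,j!\,k!}.
\end{equation*}
The factor $(2j+2k)!=\Ga\big(2(j+k)+1\big)$ is the obstacle to matching the half-integer Gamma value $\Ga(j+k+1/2)$ that appears in \eqref{eq:J}; I would remove it with the duplication formula $\Ga(2z)=\frac{2^{2z-1}}{\sqrt\pi}\,\Ga(z)\,\Ga(z+1/2)$ applied at $z=j+k+1/2$, namely
\begin{equation*}
	(2j+2k)!=\frac{2^{2(j+k)}}{\sqrt\pi}\,(j+k)!\,\Ga\big(j+k+\tfrac12\big).
\end{equation*}

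Substituting this into the expression for $a_{j,k}$ cancels the factor $(j+k)!$ and, after collecting the powers $(\pi^2/4)^k/2^{2k}=(\pi/4)^{2k}$, yields
\begin{equation*}
	a_{j,k}=\frac{\sqrt\pi}{j!\,2^{2j}}\cdot\frac{(-1)^k\,(\pi/4)^{2k}}{k!\,\Ga\big(j+k+\tfrac12\big)}.
\end{equation*}
Summing over $k$ and comparing with \eqref{eq:J} evaluated at $\nu=j-1/2$, $z=\pi/2$ (so that $z/2=\pi/4$), I would factor out $(\pi/4)^{j-1/2}$ from the Bessel series to match the $k$-dependent terms, reducing the claim to the numerical identity of prefactors $\frac{\sqrt\pi}{j!\,2^{2j}}=\frac{\pi^{1-j}}{2\,j!}\,(\pi/4)^{j-1/2}$. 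This last equality is verified by the elementary simplification $\pi^{1-j}\cdot(\pi/4)^{j-1/2}=\sqrt\pi\,/\,4^{\,j-1/2}$ together with $4^{\,j-1/2}=2^{2j}/2$, which completes the proof. The main (and essentially only) point requiring care is the correct application of the duplication formula and the subsequent exponent arithmetic; there is no genuine analytic difficulty.
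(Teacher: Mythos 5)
Your proof is correct and follows essentially the same route as the paper: a term-by-term identification of the series for $t_j$ with the Bessel series, whose only substantive ingredient is the half-integer Gamma evaluation $\Ga(n+1/2)=\sqrt\pi\,(2n)!/(4^n\,n!)$ --- which is exactly what your invocation of the Legendre duplication formula amounts to. The paper packages the same computation as a coefficient comparison in the slightly more general identity $T_j(z)=\frac{\sqrt\pi}{j!\,2^{j+1/2}}\,z^{1/4-j/2}\,J_{j-1/2}(\sqrt z)$ specialized to $z=\pi^2/4$, but there is no essential difference.
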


In view of the identity $\sin\pi x=\cos\pi(x-1/2)$, one immediately obtains the corresponding results for $\sin\pi x$ instead of $\cos\pi x$. More specifically, we have 

\begin{corollary}\label{cor:sin}
Take any real $x$. Then  	
\begin{equation}\label{eq:expan-sin}
	\sin\pi x=\sum_{j=1}^\infty t_j \pi^{2j} (x(1-x))^j. 
\end{equation}

Also, for all natural $m$ and all $x\in(0,1)$ 
\begin{equation}\label{eq:mono-sin}
	Q_m(x)<Q_{m+1}(x)<\sin\pi x
\end{equation}
and 
\begin{equation}\label{eq:bound-sin}
\begin{aligned}
	0<\sin\pi x-Q_m(x)
	&<\frac{\pi^{2m+2}(x(1-x))^{m+1}}{(2m+2)!}\,\frac1{1-q_m} \\ 
	&\widesim[2.5]{m\to\infty}\frac{\pi^{2m+2}(x(1-x))^{m+1}}{(2m+2)!},  
\end{aligned}    
\end{equation} 
where
\begin{equation}\label{eq:Q_m}
Q_m(x):=P_m(x-1/2)=\sum_{j=1}^m t_j \pi^{2j} (x(1-x))^j. 	
\end{equation}
\end{corollary}


\begin{remark}\label{rem:compar}
One may compare expansion \eqref{eq:expan-sin} with the Maclaurin expansion 
\begin{equation}\label{eq:mac-sin}
	\sin\pi x=-\sum_{j=1}^\infty \frac{(-\pi x)^{2j-1}}{(2j-1)!}. 
\end{equation}
For any natural $m$, the approximation of $\sin\pi x$ by the corresponding Maclaurin polynomial 
\begin{equation*}
	S_m(x):=-\sum_{j=1}^m \frac{(-\pi x)^{2j-1}}{(2j-1)!}
\end{equation*}
is exact to order $2m$ at $x=0$, but it is not exact to any order at $x=1$. 
In contrast, in view of \eqref{eq:mono-sin}, the approximation of $\sin\pi x$ by $Q_m(x)$ 
is exact to order $m$ at both $x=0$ and $x=1$. 
Also, in view of \eqref{eq:mono-sin}, the approximation of $\sin\pi x$ by $Q_m(x)$ is monotonic in $m$, whereas the approximation of $\sin\pi x$ by $S_m(x)$ is alternating: 
$$S_{2j}(x)<S_{2j+2}(x)<\sin\pi x<S_{2j-1}(x)<S_{2j+1}(x)$$ 
for all natural $j$ and all real $x>0$. 
These observations are illustrated in Fig.\ \ref{fig:Q,S}.

\begin{figure}[h]%
\includegraphics[width=\columnwidth]{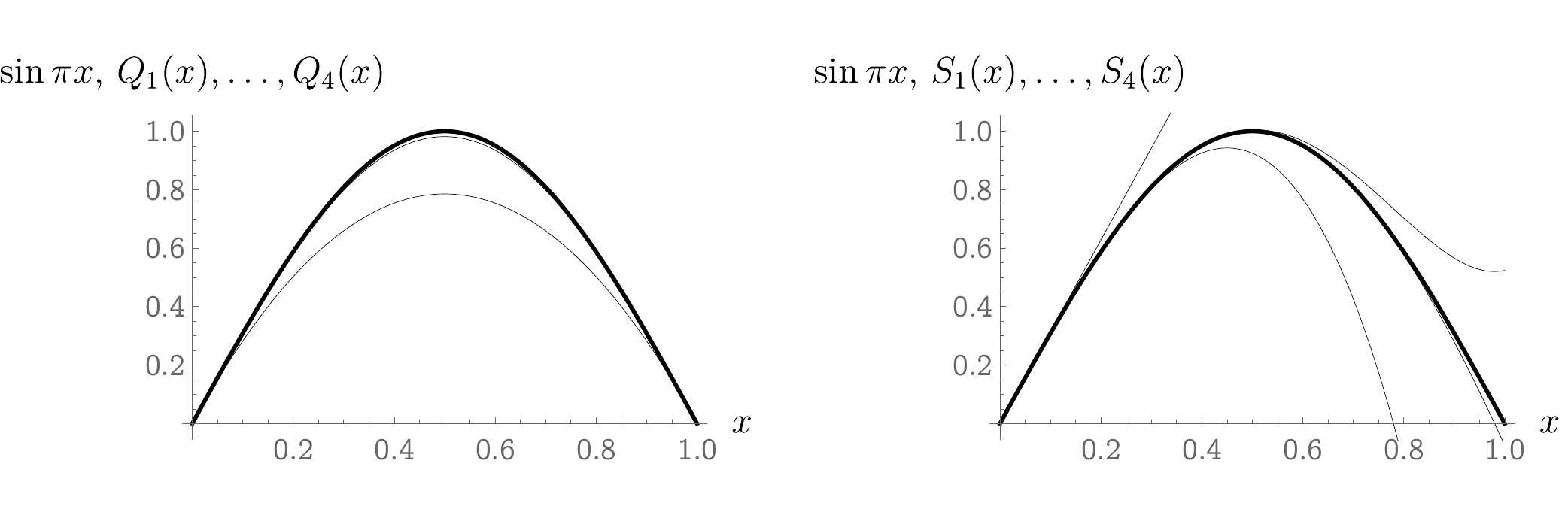}%
\caption{Left panel: Graphs $\{(x,\sin\pi x)\colon x\in[0,1]\}$ (thick) and $\{(x,Q_m(x))\colon x\in[0,1]\}$ (thin) for $m=1,2,3,4$. 
Right panel: Graphs $\{(x,\sin\pi x)\colon x\in[0,1]\}$ (thick) and $\{(x,S_m(x))\colon x\in[0,1]\}$ (thin) for $m=1,2,3,4$. 
}%
\label{fig:Q,S}%
\end{figure}
We see that $Q_3(x)$ and $Q_4(x)$ are visually indistinguishable from $\sin\pi x$ for $x\in[0,1]$; in contrast, $S_1(x),S_2(x),S_3(x),S_4(x)$ are all visually distinguishable from $\sin\pi x$ for $x\in[0,1]$. \qed 
\end{remark}

\begin{remark}\label{rem:example}
Inequalities \eqref{eq:mono} and \eqref{eq:mono-sin} can of course be used to prove other inequalities, which may have exactness or near-exactness properties. For example, we can quickly prove that 
\begin{equation*}
	f(x):=\frac{4}{9}+15 x^2-8 x+\frac{4 \left(2 \sin ^2(\pi  x)+\sin ^2(2 \pi  x)\right)}{\pi ^2}>0
\end{equation*}
for $x\in[0,1/2]$. Indeed, by \eqref{eq:mono-sin}, we have $f\ge f_4$ on $[0,1/2]$, where $f_4$ is the polynomial function obtained from $f$ by replacing the function $u\mapsto\sin\pi u$ in the above expression for $f$ by the polynomial function $Q_4$. The positivity of any polynomial on any interval can be verified purely algorithmically, which in this case gives $f_4>0$ on $(0,1/2]$, and hence $f>0$ on $[0,1/2]$. The graphs of the functions $f$ and $f-f_4$ are shown in Fig.\ \ref{fig:f,dif}. 
\begin{figure}[h]%
\includegraphics[width=\columnwidth]{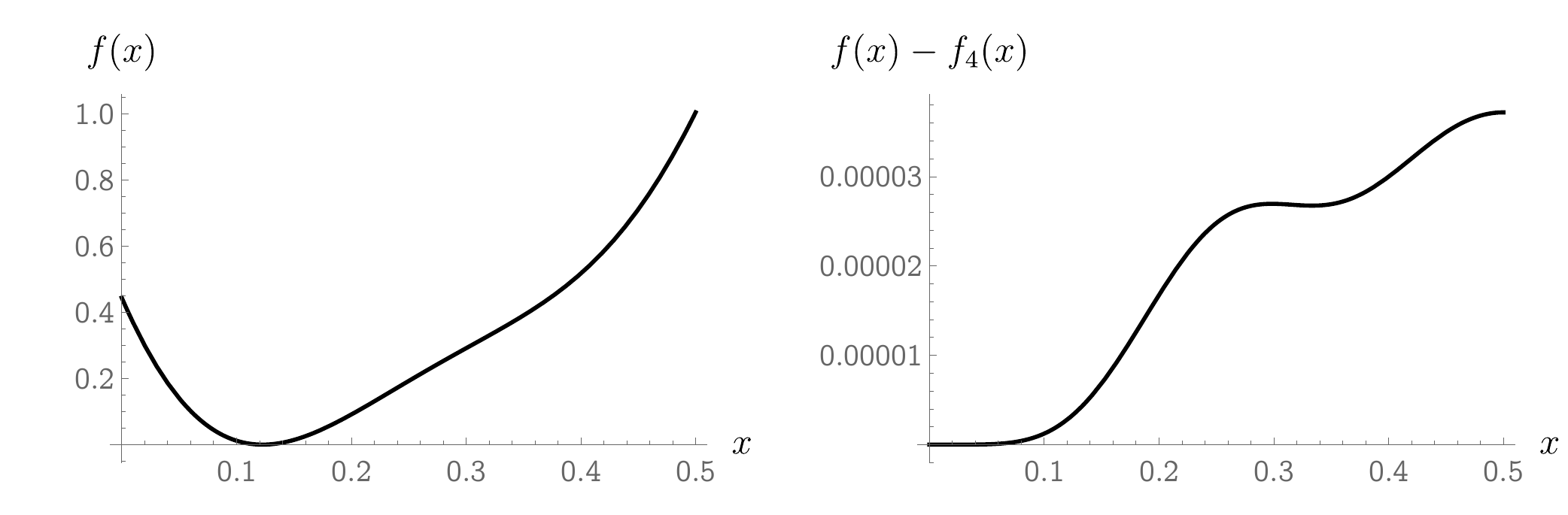}%
\caption{Graphs of 
of the functions $f$ (left panel) and $f-f_4$ (right panel).}%
\label{fig:f,dif}%
\end{figure}
\qed
\end{remark}

\section{Proofs}\label{proofs}

\begin{proof}[Proof of Theorem~\ref{th:}]
Take any real $x$ and let 
\begin{equation}\label{eq:y}
	y:=1/4-x^2,
\end{equation}
so that $y\le1/4$ and 
\begin{align}
	\cos\pi x=f(y):=\cos\Big(\frac\pi2\,\sqrt{1-4y}\Big)
	&=\sum_{n=0}^\infty\frac{(-1)^n}{(2n)!}\,\Big(\frac\pi2\,\sqrt{1-4y}\Big)^{2n} \notag \\ 
	&=\sum_{n=0}^\infty\frac{(-1)^n}{(2n)!}\,\Big(\frac{\pi^2}4\Big)^n
	(1-4y)^n \notag \\ 
	&=\sum_{n=0}^\infty\frac{(-1)^n}{(2n)!}\,\Big(\frac{\pi^2}4\Big)^n
	\sum_{j=0}^n\binom nj (-4y)^j \notag \\ 
	&=\sum_{j=0}^\infty(-4y)^j \sum_{n=j}^\infty \frac{(-1)^n}{(2n)!}\,\Big(\frac{\pi^2}4\Big)^n
	\binom nj  \label{eq:fubini} \\ 
	&=\sum_{j=0}^\infty(\pi^2 y)^j\, t_j=\sum_{j=1}^\infty(\pi^2 y)^j\, t_j; \label{eq:j=1}
\end{align}
the equality in \eqref{eq:fubini} follows by the Fubini theorem, 
the first equality in \eqref{eq:j=1} follows by the definition of $t_j$ in \eqref{eq:t,a}, 
and the second equality in \eqref{eq:j=1} follows because $t_0=f(0)=0$. 
Thus, identity \eqref{eq:expan} is proved. 

We have already noticed that $t_0=f(0)=0$. Similarly, $t_1=f'(0)/\pi^2=1/\pi$. As for \eqref{eq:recur}, it is the special case, with $z=\pi^2/4$, of the recurrence 
\begin{gather}\label{eq:T-recur}
	T_j(z)=\frac{2j-3}{2jz}\,T_{j-1}(z)-\frac1{4j(j-1)z}\,T_{j-2}(z)\quad\text{for }j=2,3,\dots, 
\end{gather}
where 
\begin{equation}\label{eq:T:=}
	T_j(z):=\sum_{k=0}^\infty 
\frac{(-z)^k}{(2j+2k)!}\,\binom{j+k}j,      
\end{equation}
so that $t_j=T_j(\pi^2/4)$. 
In turn, identity \eqref{eq:T-recur} can be verified by a straightforward comparison of the coefficients of the powers of $z$ on both sides of the identity. 

Next, by \eqref{eq:t,a}, for $j=1,2,\dots$ and $k=0,1,\dots$ the ratio
\begin{equation*}
	\frac{a_{j,k+1}}{-a_{j,k}}=\frac{\pi^2}{8(k+1)(2j+2k+1)} 
\end{equation*}
is positive and less than $1$, and this ratio tends to $0$ uniformly in $k=0,1,\dots$ as $j\to\infty$. Therefore, 
$0<t_j<a_{j,0}=\frac1{(2j)!}$
for all $j=1,2,\dots$, and 
$t_j\sim a_{j,0}=\frac1{(2j)!}\quad\text{as }j\to\infty$. 
which verifies the last sentence of Theorem~\ref{th:}. 
\end{proof}

\begin{proof}[Proof of Corollary~\ref{cor:}]
The inequalities in \eqref{eq:mono} follow immediately from \eqref{eq:P_m}, \eqref{eq:expan}, and the first inequality in \eqref{eq:<t_j<}. 

Recalling the definition of $\de_m(x)$ in \eqref{eq:bound}, identity \eqref{eq:expan}, the definition \eqref{eq:y} of $y$, and the second inequality in \eqref{eq:<t_j<}, 
we see that 
\begin{equation*}
\de_m(x)=\sum_{j=m+1}^\infty t_j \pi^{2j} y^j<\sum_{j=m+1}^\infty b_j(y)   
\end{equation*}
for all $x\in(-1/2,1/2)$, 
where 
\begin{equation*}
b_j(y)	:=\frac{(\pi^2 y)^j}{(2j)!}. 
\end{equation*}
Moreover, for any natural $m$, any natural $j\ge m+1$, and any $y\in(0,1/4]$, 
\begin{equation*}
	\frac{b_{j+1}(y)}{b_j(y)}
	=\frac{\pi^2 y}{(2j+2)(2j+1)}\le\frac{\pi^2/4}{(2m+4)(2m+3)}=q_m<1,   
\end{equation*}
and $q_m\to0$ as $m\to\infty$. 
Thus, we have verified \eqref{eq:bound}, which completes the proof of Corollary~\ref{cor:}. 
\end{proof}

\begin{proof}[Proof of Proposition~\ref{prop:}]
Identity \eqref{eq:t=} is a special case, with $z=\pi^2/4$, of the identity
\begin{equation}\label{eq:T=}
		T_j(z)=\frac{\sqrt\pi}{j!\,2^{j+1/2} }\, z^{1/4-j/2}\,J_{j-1/2}(\sqrt z)    
\end{equation} 
for real $z>0$, with $T_j(z)$ as defined in \eqref{eq:T:=}. In turn, to verify identity \eqref{eq:T=}, it is enough to compare the coefficients of the corresponding powers of $z$ in both sides of \eqref{eq:T=}, which is done with the help of the identity 
\begin{equation*}
	\Ga(n+1/2)=\frac{\sqrt\pi\,(2n)!}{4^n n!}
\end{equation*}
for $n=0,1,\dots$, which in turn is easy to check by induction. 
\end{proof}

{\bf Acknowledgment: } Thank you due to a referee for references \cite{milo86,mastro-milo}. 

\def\cprime{$'$} \def\polhk#1{\setbox0=\hbox{#1}{\ooalign{\hidewidth
  \lower1.5ex\hbox{`}\hidewidth\crcr\unhbox0}}}
  \def\polhk#1{\setbox0=\hbox{#1}{\ooalign{\hidewidth
  \lower1.5ex\hbox{`}\hidewidth\crcr\unhbox0}}}
  \def\polhk#1{\setbox0=\hbox{#1}{\ooalign{\hidewidth
  \lower1.5ex\hbox{`}\hidewidth\crcr\unhbox0}}} \def\cprime{$'$}
  \def\polhk#1{\setbox0=\hbox{#1}{\ooalign{\hidewidth
  \lower1.5ex\hbox{`}\hidewidth\crcr\unhbox0}}} \def\cprime{$'$}
  \def\polhk#1{\setbox0=\hbox{#1}{\ooalign{\hidewidth
  \lower1.5ex\hbox{`}\hidewidth\crcr\unhbox0}}} \def\cprime{$'$}
  \def\cprime{$'$}

\end{document}